\newtheorem{thm}{Theorem}%section
\newtheorem{cor}[thm]{Corollary}
\newtheorem{defn}{Definition}
\newcommand{\abs}[1]{\left|#1\right|}
\newcommand{\set}[1]{\left\{#1\right\}}
\DeclareMathOperator{\comp}{comp}
\DeclareMathOperator{\anum}{a}
\DeclareMathOperator{\bnum}{b}
\DeclareMathOperator{\cnum}{c}
\DeclareMathOperator{\snum}{sa}
\DeclareMathOperator{\sa}{sa}
\DeclareMathOperator{\sech}{sech}
\DeclareMathOperator{\Cat}{Cat}
\def\ds{\displaystyle}
\begin{document}

\title{Signed a-polynomials of graphs and Poincar\'e polynomials of real toric manifolds}

\author{Seunghyun Seo}
\address[Seunghyun Seo]{Department of Mathematics Education, Kangwon National University, Chuncheon 200-701, Korea}
\email{shyunseo@kangwon.ac.kr}

\author{Heesung Shin}
\address[Heesung Shin]{Department of Mathematics, Inha University, Incheon 402-751, Korea}
\email{shin@inha.ac.kr}
%\thanks{\dag Corresponding author}
\date{\today}                                           % Activate to display a given date or no date

%\author[S. Seo and H. Shin]{Seunghyun Seo\addressmark{1}\and Heesung Shin\addressmark{2}}
%\address{\addressmark{1}Department of Mathematics Education, Kangwon National University, Chuncheon 200-701, Korea\\
%\addressmark{2}Department of Mathematics, Inha University, Incheon 402-751, Korea}

%\keywords{}
% No need to include the dates
%\received{(Some date $\leq$ 1$^{\mbox{\footnotesize{st}}}$ December 2009)}
%\revised{\today}
%\accepted{tomorrow}

\maketitle

\begin{abstract}
%\paragraph{Abstract.}
Recently, Choi and Park introduced an invariant of a finite simple graph, called \emph{signed a-number}, arising from computing certain topological invariants of some specific kinds of real toric manifolds.
They also found the signed a-numbers of path graphs, cycle graphs, complete graphs, and star graphs.

We introduce a \emph{signed a-polynomial} which is a generalization of the signed a-number and gives \emph{a-, b-, and c-numbers}.
The signed a-polynomial of a graph $G$ is related to the Poincar\'e polynomial $P_{M(G)}(z)$, which is the generating function for the Betti numbers of the real toric manifold $M(G)$. 
We give the generating functions for the signed a-polynomials of not only path graphs, cycle graphs, complete graphs, and star graphs, but also complete bipartite graphs and complete multipartite graphs.
As a consequence, we find the Euler characteristic number and the Betti numbers of the real toric manifold $M(G)$ for complete multipartite graphs $G$.

% Here goes the French abstract
%\paragraph{R\'esum\'e.}
%R\'ecemment, Choi et Park ont pr\'esent\'e un invariant d'un graphe fini simple, appel\'e \emph{a-nombre sign\'e}, r\'esultant de calculer certains invariants topologique des certains types sp\'ecifiques des vari\'et\'es toriques r\'eels.
%Ils ont \'egalement trouv\'e les nombreps sign\'es des graphes chemins, des graphes cycles, des graphes complets, et des graphes \'etoiles.
%Dans cet article, nous pr\'esentons un \emph{a-polyn\^ome sign\'e}
%qui est une g\'en\'eralisation du a-nombre sign\'e.
%Nous donnons les a-polyn\^omes sign\'es
%non seulement des graphes chemins,
%des graphes cycles, des graphes complets, et des graphes \'etoiles,
%mais aussi des graphes bipartis complets, et des graphes multipartis complets.
\end{abstract}

\section{Introduction}
In algebraic topology, Choi and Park \cite{CP12} recently introduced a graph invariant, called \emph{signed a-number} of finite simple graphs $G$, denote by $\snum(G)$ as follows:
\begin{itemize}
\item $\snum(\emptyset) = 1$.
\item $\snum(G)$ is the product of signed a-numbers of connected components of $G$.
\item $\snum(G) = 0$ if $G$ is a connected graph with odd vertices.
\item If $G$ is connected with even vertices, then $\snum(G)$ is given by the negative of the sum of signed a-numbers of all induced subgraphs $G'$ of $G$ except itself $G$.
\end{itemize}
Let the \emph{a-number} $\anum(G)$ be the absolute value of the signed a-number of $G$,
the \emph{b-number} $\bnum(G)$ the  sum of  signed a-numbers induced subgraphs of $G$, and
the \emph{c-numbers} $\cnum_i(G)$ the sum of a-numbers of induced subgraphs of $G$ with $i$ vertices.

These numbers arise from computing certain \emph{topological invariants} of some specific kinds of real toric manifolds which are important objects in toric topology.
For a finite simple graph $G$, the real toric manifold $M(G)$ is the set of real points in the toric manifold associated to the graph associahedron $P_{\mathcal{B}(G)}$ which is the  \emph{nestohedron} as the Minkowski sum of simplices obtained from connected induced subgraphs of $G$.  
For further information, see \cite{Del88, DJ91, Pos09, PRW08}.

Recently, Choi and Park \cite[Theorem 1.1]{CP12} showed that
the Euler characteristic $\chi(M(G))$ is equal to $\bnum(G)$ and
the (rational) Betti number $\beta_i(M(G))$ is equal to $\cnum_{2i}(G)$.
We remark that $\cnum_{2i}(G)$ is the same with $\anum_i(G)$ in \cite{CP12}.
They also computed these numbers of path graphs $P_{2n}$, cycle graphs $C_{2n}$,  complete graphs $K_{2n}$, and star graphs $K_{1,2n-1}$. 

In this paper, we introduce a \emph{signed a-polynomial} which is a generalization of the signed a-number and gives a-, b-, and c-numbers.
The signed a-polynomial of a graph $G$ is related to the Poincar\'e polynomial $P_{M(G)}(z)$, which is the generating function for the Betti numbers of the real toric manifold $M(G)$. 
The relation will be shown in the equation \eqref{eq:convert}.
We give the signed a-polynomials of not only path graphs, cycle graphs, complete graphs,  and star graphs, but also complete bipartite graphs $K_{p,q}$ and complete multipartite graphs $K_{p_1, \dots, p_m}$.
As a consequence, we find $\chi(M(G))$ and $\beta_i(M(G))$ for $G=K_{p,q}$ and $G=K_{p_1, \dots, p_m}$.

%%%%%%%%%% (Our outline is here.)

\section{Preliminaries}

From now on, we assume that a graph is finite, undirected, and simple.
We rewrite a formal definition of a signed a-number $\snum(G)$ of a graph $G=(V,E)$ in the previous section by
$$
\snum(G) =
\begin{cases}
1 & \text{if $G$ is the empty graph;}\\
0 & \text{if $G$ is connected and $\abs{V}$ is odd;}\\
\ds - \sum_{V' \subsetneq V} \snum(G|_{V'}) & \text{if $G$ is connected and $\abs{V}$ is even $\ge$ 2;}\\
\ds \prod_{G' \in \comp(G)} \snum(G') & \text{if $G$ is disconnected,}
\end{cases}
$$
where $G|_{V'}$ is the induced subgraph of $G$ by a vertex subset  $V'$ and $\comp(G)$ is the set of connected components of $G$.
From above definition, it is easy to check $\snum(G)=0$ for every graph $G$ with at least one connected component on odd vertices; and $\sum_{V' \subseteq V} \snum(G|_{V'}) = 0 $ for every nonempty graph $G$ on $V$ with every connected component on even vertices.
Thus,  we find a simpler equivalent  definition of a signed a-number as follows.
\begin{defn}
A \emph{signed a-number} $\snum(G)$ of a graph $G=(V,E)$ is defined by
\begin{align}\label{eq:snum}
\snum(G) =
\begin{cases}
1, & \text{if $G$ is the empty graph;}\\
0, & \text{if $G$ has a connected component on odd vertices;}\\
\ds - \sum_{V' \subsetneq V} \snum(G|_{V'}), & \text{otherwise.}\\
\end{cases}
\end{align}
\end{defn}
Consequently, we define a-, b-, and c-numbers of a graph with the signed a-numbers.
\begin{defn}
The \emph{a-, b-, and c-numbers} of a graph $G$, denoted by $\anum(G)$, $\bnum(G)$, and $\cnum_i(G)$, are defined by
\begin{align}
\anum(G) &= (-1)^{\abs{V}/2} \snum(G), \label{eq:anum}\\
\bnum(G) &= \sum_{V' \subseteq V} \snum(G|_{V'}), \label{eq:bnum}\\
\cnum_{i}(G) &= \sum_{V' \subseteq V \atop \abs{V'} = i} \anum(G|_{V'}) =  (-1)^{i/2} \sum_{V' \subset V \atop \abs{V'} = i} \snum(G|_{V'}). \label{eq:cnum}
\end{align}
\end{defn}
By definition, for any graph $G$, it hold that $\cnum_i(G) = 0$ if $i$ is odd, and $\cnum_{n}(G) = \anum(G)$ if  $n$ is the number of vertices of $G$.
In topological viewpoint \cite[Remark 2.2]{CP12}, it is obvious that $\anum(G)$ and $\cnum_{i}(G)$ are nonnegative integers.

\section{On signed a-polynomials}
Now, we introduce a generalization of a-, b-, and c-numbers of graphs.
\begin{defn}[Signed a-polynomial]
The \emph{signed a-polynomial} $\snum(G;t)$ of a graph $G$ is defined by
\begin{align}\label{eq:sagt}
\snum(G;t) = \sum_{V' \subseteq V(G)} \snum(G|_{V'}) \ t^{\abs{V \setminus V'}}.
\end{align}
\end{defn}
From the equations \eqref{eq:snum} -- \eqref{eq:sagt}, for $\abs{V(G)} = n$, it holds that
\begin{align*}
\snum(G) &= \snum(G;0),& \anum(G) &  = (-1)^{n/2} \snum(G;0),\\
\bnum(G) &= \snum(G;1), & \cnum_i(G) &= (-1)^{i/2} [t^{n-i}] \snum(G;t).
\end{align*}
Thus, $\snum(G;t)$ is represented as the sum of $\cnum_{i}(G)$'s by
\begin{align}
\snum(G;t) = \sum_{j=0}^{\lfloor n/2 \rfloor} (-1)^{j} c_{2j}(G) t^{n-2j}.
\label{eq:formula}
\end{align}
For example, if $G = (\set{A,B,C,D}, \set{\set{A,B}, \set{A,C}, \set{A,D}, \set{B,C}, \set{B,D}})$, then $$\sa(G;t) = t^4 -5t^2 +4.$$
Thus, $\snum(G) = \anum (G)= 4$, $\bnum(G) = 0$, and $\set{\cnum_{i}(G)}_{i=0}^4 = 1,0,5,0,4.$

\begin{rmk}
The \emph{Poincar\'e polynomial} $P_{M(G)}(z)= \sum_{i \ge 0} \beta_i(M(G)) z^i$
is the generating function for the \emph{Betti numbers} $\beta_i(M(G))$ of the real toric manifold $M(G)$.
Since $\beta_i(M(G)) = \cnum_{2i}(G)$ in \cite[Theorem 1.1]{CP12},  it holds that
\begin{align}
P_{M(G)} (z) &= (\sqrt{-z})^{\abs{V}} \sa\left(G; \frac{1}{\sqrt{-z}}\right). \label{eq:convert}
\end{align}

\end{rmk}

In the rest of the section, we compute the generating functions for signed a-polynomials of path graphs, cycle graphs, complete graphs, and star graphs.

\begin{thm}
Let $P_n$ be the \emph{path graph} with $n$ vertices, which is a tree with exactly $n-2$ vertices of degree 2.  Then the generating function for signed a-polynomials of $P_n$ is given by
\begin{align}
\sum_{n\ge 0} \snum(P_{n}; t) {x^{n}} &= \frac{-1+2tx +\sqrt{1+4x^2}}{2tx - 2(t^2-1)x^2}.
\label{eq:gfpn}
\end{align}
%\begin{align*}
%\snum(P_{2n}; t) &= \sum_{k=0}^{n} \left[ \binom{2n}{k} - \binom{2n}{k-1} \right] (-1)^k t^{2n-2k}
%\end{align*}
\end{thm}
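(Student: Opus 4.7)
The plan is to exploit the fact that every induced subgraph of $P_n$ is a disjoint union of subpaths. Encoding a subset $V' \subseteq V(P_n)$ as the binary string $b_1 \cdots b_n$ with $b_i = \mathbf{1}[i \in V']$, the multiplicativity of $\sa$ over connected components gives $\sa(P_n|_{V'}) = \prod \sa(P_k)$, where the product runs over the maximal runs of $1$'s and $k$ denotes the length of such a run. Since $\sa(P_k) = 0$ for odd $k$, only strings whose $1$-runs are all of even length contribute to $\sa(P_n;t)$. I would introduce the auxiliary series $g(x) := \sum_{k \ge 0} \sa(P_{2k}) x^{2k}$, whose coefficients are defined via the recursion for $\sa$.

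To derive a functional equation for $A(x,t) := \sum_{n \ge 0} \sa(P_n;t) x^n$, I would condition on the status of vertex $1$. Either $1 \notin V'$, contributing a factor $tx$ times the free choice on the remaining $P_{n-1}$; or $1$ lies in a maximal $1$-run of length $2k$, which forces vertex $2k+1$ (if present) to lie outside $V'$ and leaves the suffix free, contributing $\sa(P_{2k})x^{2k} \cdot tx$ times the generating series for the remaining $P_{n-2k-1}$, with the boundary case $2k = n$ contributing the bare factor $\sa(P_{2k})x^{2k}$. Assembled into a generating-function identity, this recursion simplifies to
\[
A(x,t)\bigl(1 - tx\, g(x)\bigr) = g(x).
\]

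The next step is to identify $g(x)$ in closed form. From the defining recursion, $\bnum(P_n) = \sa(P_n;1) = 0$ for every even $n \ge 2$ (and $\bnum(P_0) = 1$), so the even part of $A(x,1)$ is identically $1$. Since $g$ is an even function, $A(-x,1) = g(x)/(1 + x\,g(x))$; averaging the two yields
\[
\frac{A(x,1) + A(-x,1)}{2} = \frac{g(x)}{1 - x^2 g(x)^2} = 1,
\]
so $g(x)$ satisfies $x^2 g^2 + g - 1 = 0$, and the branch with $g(0) = 1$ is $g(x) = (-1 + \sqrt{1+4x^2})/(2x^2)$.

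Finally, I would substitute this expression into $A(x,t) = g(x)/(1 - tx\,g(x))$ and rationalize by multiplying numerator and denominator by the conjugate $2x + t + t\sqrt{1+4x^2}$; a short computation then delivers the stated closed form. The main obstacle I anticipate is the algebraic bookkeeping in this final rationalization and making sure the correct branch of the square root is selected; the combinatorial ingredients (the recursion in the first step and the even-part identification of $g$) are both clean once the binary-string encoding is in place.
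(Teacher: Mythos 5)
Your proof is correct, but it takes a genuinely different route from the paper's. The paper simply imports the formula $\cnum_{2i}(P_n)=\binom{n}{i}-\binom{n}{i-1}=\Cat_{n-i,i}$ from Choi--Park and sums it against \eqref{eq:formula} using the known generating function of the Catalan triangle; the whole argument is a reindexing plus a citation. You instead work directly from the definition of $\sa$: the run decomposition of subsets of a path, multiplicativity over components, and the vanishing of $\sa(P_k)$ for odd $k$ give the transfer-matrix identity $A(x,t)=g(x)/(1-tx\,g(x))$ with $g(x)=\sum_k \sa(P_{2k})x^{2k}$, and the observation that $\bnum(P_{2n})=\sa(P_{2n};1)=0$ for $n\ge 1$ (which is immediate from \eqref{eq:snum}, as the paper itself notes in Section 2) pins down $g$ via $x^2g^2+g-1=0$. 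I checked the final rationalization: with $s=\sqrt{1+4x^2}$ one indeed has
\begin{align*}
(s-1)\bigl(2tx-2(t^2-1)x^2\bigr)=(2tx-1+s)\bigl(2x^2+tx-txs\bigr),
\end{align*}
using $s^2=1+4x^2$, so your expression for $A(x,t)$ agrees with \eqref{eq:gfpn}. What your approach buys is self-containedness (no appeal to \cite[Theorem~2.5]{CP12}) and, as a byproduct, an independent derivation of $\sa(P_{2n})=(-1)^n\Cat_n$ from the quadratic for $g$; what the paper's approach buys is brevity and the explicit coefficient formula $\cnum_{2i}(P_n)=\Cat_{n-i,i}$, i.e.\ the Betti numbers themselves, which your generating-function identity yields only after a further expansion.
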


\begin{proof}
From Theorem 2.5 in \cite{CP12}, it is known that
$$\cnum_{2i} (P_n) =  \binom{n}{i} - \binom{n}{i-1} = \Cat_{n-i,i},$$
with Catalan triangle numbers $\Cat_{n,k} = \binom{n+k}{k} - \binom{n+k}{k-1}$.
Using the formula \eqref{eq:formula}, we have
\begin{align*}
\snum(P_n; t) = \sum_{j=0}^{\lfloor n/2 \rfloor} (-1)^j \Cat_{n-j,j}  t^{n-2j}.
\end{align*}
Thus, we obtain
\begin{align}
\sum_{n\ge 0} \snum(P_{n}; t) {x^{n}}
= \sum_{n\ge 0} \sum_{j=0}^{\lfloor n/2 \rfloor} (-1)^j \Cat_{n-j,j} t^{n-2j} {x^{n}}
%= \sum_{k\ge 0} \sum_{j\ge 0} (-1)^j \Cat_{k,j} t^{k-j} {x^{k+j}}
= \sum_{k\ge 0} \sum_{j\ge 0}  \Cat_{k, j} {(-x/t)^{j}} (tx)^{k}.
\label{eq:gfP}
\end{align}
Since the generating function for Catalan triangle numbers is
$$\sum_{n\ge 0} \sum_{i\ge 0}  \Cat_{n, i} {w^i} z^{n} = \frac{\Cat(wz)}{1-z\Cat(wz)},$$
where $\Cat(x) = \frac{1-\sqrt{1-4x}}{2x}$,
%Since the expression $\frac{1}{2} (F(w) + F(-w))$ is the even section of $F(w)$,
%$$\sum_{n\ge 0} \sum_{i\ge 0}  \Cat_{n, 2i} {w^{2i}} z^{n} = \frac{1}{2} \left( \frac{\Cat(wz)}{1-z\Cat(wz)} + \frac{\Cat(-wz)}{1-z\Cat(-wz)} \right).$$
therefore the formula \eqref{eq:gfP} becomes the formula \eqref{eq:gfpn}.
%Consider the set of lattice paths from $(0,0)$ to $(2n, 2n-2k)$ with $(1,1)$-steps, $(1,-1)$-steps on the first   quadrant. We complete the proof of this result from the decomposition of such a lattice path into several Dyck paths and  the fact $\snum(P_{2n};0) = (-1)^n \Cat_{n}$, where $\Cat_{n}$ is $n$-th Catalan number.
\end{proof}

\begin{table}[t]
\centering
\begin{tabular}{c||c|c|c}
$G$ & $P_0$ & $P_{2n}$ & $P_{2n+1}$ \\
\hline
$\snum(G)$ & 1 & $(-1)^n \Cat_{n}$ & $0$ \\
$\anum(G)$ & 1 & $\Cat_{n}$ & $0$\\
$\bnum(G)$ & 1 & $0$ & $(-1)^n \Cat_{n}$ \\
$\cnum_{2i}(G)$ & $\delta_{i,0}$ & $\ds \Cat_{2n-i,i}$ & $\ds \Cat_{2n+1-i,i}$\\
%$\snum(G;t)$ & $\ds \sum_{k=0}^n (-1)^{n-k} \Cat_{n+k,n-k} t^{2k}$ & $\ds \sum_{k=0}^n (-1)^{n-k} \Cat_{n+k+1,n-k}  t^{2k+1}$\\
\hline
\\[-9pt]
g.f. for $\snum(G;t)$ & \multicolumn{3}{c}{$\ds \sum_{n\ge 0} \snum(P_{n}; t) {x^{n}} = \frac{-1+2tx +\sqrt{1+4x^2}}{2tx - 2(t^2-1)x^2}$}
\end{tabular}
\caption{Numbers for path graphs $P_n$, where Catalan triangle numbers $\Cat_{n,k} = \binom{n+k}{k} - \binom{n+k}{k-1}$ and Catalan numbers $\Cat_{n} = \Cat_{n,n}=\frac{1}{n+1} \binom{2n}{n}$.}
\end{table}

\begin{rmk}
For two given sequences $\sigma = (s_0, s_1, s_2, \ldots)$ and $\tau = (t_1, t_2, t_3, \ldots)$, define the generalized Catalan number $B_n$ by the sum of weighted Motzkin paths from $(0,0)$ to $(n,0)$ with up steps $(1,1)$, horizontal steps $(1,0)$, and down steps $(1,-1)$ where we associate weight $1$ to each up step, weight $s_k$ to each horizontal step on the line $y=k$, and weight $t_k$ to each down step between two lines $y=k-1$ and $y=k$.
For example, if $\sigma \equiv 0$ and $\tau \equiv 1$, then $B_{2n} = \Cat_n$.
In Section 7.4 in \cite{Aig07},
the generating function $B(z) = \sum_{n\ge 0} B_n z^n$ of the generalized Catalan number $B_n$ with $\sigma = (a, s, s, \ldots)$ and $\tau= (b, u, u, \ldots)$ is equal to
\begin{equation}
B(z) = \frac{ (2u -b) + (bs-2au) z - b \sqrt{1-2sx + (s^2-4u) z^2}}{2(u-b) + 2(bs-2au+ab)z +2(a^2 u -abs +b^2)z^2}.
\label{eq:gfgc}
\end{equation}

For $(a,s,b,u,z) = (t,0,-1,-1,x)$, the formula \eqref{eq:gfgc} gives a combinatorial interpretation of the following formula
$$\sum_{n\ge 0} \snum(P_{n}; t) {x^{n}} = \frac{-1+2tx +\sqrt{1+4x^2}}{2tx - 2(t^2-1)x^2}$$
and for $(a,s,b,u,z) = (0,0,-1,t^2-1,x)$, the formula \eqref{eq:gfgc} gives a combinatorial interpretation of the following formula
$$\sum_{n\ge 0} \snum(P_{2n}; t) {x^{2n}}  = \frac{-(t^2+1) - (t^2-1) \sqrt{1+4x^2}}{-2t^2 + 2(t^2-1)^2x^2}.$$
\end{rmk}

\begin{thm}
Let $C_n$ be the \emph{cycle graph} with $n$ vertices, which is a connected graph with all vertices of degree 2.  Then the generating function for signed a-polynomials of $C_n$ is given by
\begin{align}
\sum_{n\ge 0} \snum(C_{n}; t) x^{n}
&=  \frac{1}{2} + \frac{1}{2\sqrt{1+4x^2}} \cdot \frac{(t^2+1) x + t\sqrt{1+4x^2}}{t-(t^2-1)x}.
\label{eq:gfcn}
\end{align}
\end{thm}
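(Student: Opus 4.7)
The plan is to mimic the template of the path-graph proof. The first step is to obtain a closed form for $\cnum_{2i}(C_n)$ from the cycle-graph part of Theorem 2.5 in \cite{CP12}, namely $\cnum_{2i}(C_n)=\binom{n}{i}$ for $2i<n$, together with the exceptional value $\cnum_n(C_n)=\tfrac12\binom{n}{n/2}$ when $n$ is even (this extra factor of $\tfrac12$ reflects the cyclic symmetry: every $2i$-vertex induced subgraph of $C_n$ is a union of arcs whose contributions to $\anum$ sum to $\binom{n}{i}$, except when $2i=n$ where an overcount by two occurs). Plugging into \eqref{eq:formula} yields
\[
\snum(C_n;t)=\sum_{j=0}^{\lfloor n/2\rfloor}(-1)^j\binom{n}{j}t^{n-2j}-\frac{(-1)^{n/2}}{2}\binom{n}{n/2}\cdot[n\ge 2,\,n\text{ even}].
\]

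After multiplying by $x^n$ and summing over $n$, the generating function splits as $A(x,t)-B(x)$ corresponding to the two pieces above. For $A$, the change of variables $k=n-2j$ rearranges the double sum into
\[
A(x,t)=\sum_{k\ge 0}(tx)^k\sum_{j\ge 0}\binom{k+2j}{j}(-x^2)^j,
\]
and I would evaluate the inner sum using the Lagrange/Raney identity
\[
\sum_{j\ge 0}\binom{k+2j}{j}y^j=\frac{\Cat(y)^k}{\sqrt{1-4y}},\qquad \Cat(y)=\frac{1-\sqrt{1-4y}}{2y}.
\]
Summing the resulting geometric series in $k$ then gives $A(x,t)=\bigl(\sqrt{1+4x^2}(1-tx\,\Cat(-x^2))\bigr)^{-1}$. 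The correction $B$ reduces to $B(x)=\tfrac12\bigl(1/\sqrt{1+4x^2}-1\bigr)$ using the central-binomial generating function $\sum_m\binom{2m}{m}y^m=1/\sqrt{1-4y}$.

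What remains is algebraic simplification. I would substitute $\Cat(-x^2)=(\sqrt{1+4x^2}-1)/(2x^2)$ into $A$ and rationalize by multiplying numerator and denominator by the conjugate of $(2x+t)-t\sqrt{1+4x^2}$; the new denominator factors cleanly as $4x\bigl(t-(t^2-1)x\bigr)$, yielding $A(x,t)=\bigl((2x+t)+t\sqrt{1+4x^2}\bigr)/\bigl(2\sqrt{1+4x^2}(t-(t^2-1)x)\bigr)$. Combining with $-B(x)$, the $-1/(2\sqrt{1+4x^2})$ piece merges with $A$ to collapse the numerator to $(t^2+1)x+t\sqrt{1+4x^2}$, while the $+\tfrac12$ supplies the constant term in \eqref{eq:gfcn}.

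The main obstacle I expect is exactly this final simplification: tracking signs through the conjugate rationalization and verifying that the $\tfrac12$ produced by the cyclic-symmetry correction in $B$ is precisely what produces the leading $\tfrac12$ in \eqref{eq:gfcn}. Once those cancellations are checked, the formula follows immediately from the c-number formula and the Raney identity.
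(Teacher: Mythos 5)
Your proposal is correct and follows essentially the same route as the paper: take the values of $\cnum_{2i}(C_n)$ from \cite{CP12}, substitute into \eqref{eq:formula}, split off the $2i=n$ correction (which produces the $\tfrac12$ and the central-binomial series), reindex with $k=n-2j$, and evaluate the remaining double sum via the generating function for $\binom{2j+k}{j}$. Your algebraic simplification checks out, and your form of the key identity, $\sum_{j\ge 0}\binom{k+2j}{j}y^j=\Cat(y)^k/\sqrt{1-4y}$, is in fact the correct one (the paper's displayed version with $1/(1-\sqrt{1-4z})$ contains a typo).
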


\begin{proof}
From Theorem 2.6 in \cite{CP12}, it is known that
$$\cnum_{2i} (C_n) =
\begin{cases}
1, & \text{if $i=n=0$,}\\
\frac{1}{2}\binom{n}{n/2}, &\text{if $2i = n > 0$,} \\
\binom{n}{i}, & \text{if $2i < n$.}
\end{cases}
$$
Using the formula \eqref{eq:formula}, we obtain
\begin{align}
\sum_{n\ge 0} \snum(C_{n}; t) {x^{n}}
&= \sum_{n\ge 0} \sum_{j=0}^{\lfloor n/2 \rfloor} (-1)^j   \cnum_{2j}(C_n) \ t^{n-2j} {x^{n}}
= \sum_{k\ge 0} \sum_{j\ge 0} (-1)^j   \cnum_{2j}(C_{2j+k}) \ t^{k} {x^{2j+k}} \notag \\
%&= \sum_{j\ge 0} \left( -\frac{1}{2} (-1)^j  \binom{2j}{j}  + \sum_{k\ge 0} (-1)^j  \binom{2j+k}{j} \ (tx)^{k} \right) {x^{2j}} \notag \\
&= \frac{1}{2} -\frac{1}{2} \sum_{j\ge 0}  \binom{2j}{j}  {(- x^2)^{j}} +  \sum_{k\ge 0}  \sum_{j\ge 0} \binom{2j+k}{j} \ (tx)^{k}   {(-x^2)^{j}} .\label{eq:gfC}
\end{align}
From $\sum_{n \ge 0} \binom{2n+k}{n} z^n = \frac{1}{1-\sqrt{1-4z}} \left( \frac{1-\sqrt{1-4z}}{2z}\right)^k$, we have two generating functions:
\begin{align*}
\sum_{n \ge 0} \binom{2n}{n} z^n &= \frac{1}{1-\sqrt{1-4z}}, \\
\sum_{n \ge 0} \sum_{k\ge 0} \binom{2n+k}{n} w^k z^n &=  \frac{1}{1-\sqrt{1-4z}}\cdot\frac{1}{ 1 - w \left( \frac{1-\sqrt{1-4z}}{2z}\right)} .
\end{align*}
Using above two generating functions, the formula \eqref{eq:gfC} becomes the formula \eqref{eq:gfcn}.
%Consider the set of arbitrary lattice paths from $(0,0)$ to $(2n, 2n-2k)$ with $(1,1)$-steps, $(1,-1)$-steps. We complete the proof of this result from the decomposition of such a lattice path into several Dyck paths and  the fact $\snum(C_{2n};0) = \frac{(-1)^n}{2} \binom{2n}{n}$.
\end{proof}

\begin{table}[t]
\centering
\begin{tabular}{c||c|c|c}
$G$ & $C_0$ & $C_{2n}$  & $C_{2n+1}$ \\
\hline
$\snum(G)$ & 1 & $ \ds \frac{(-1)^n}{2} \binom{2n}{n} $ & $0$ \\
$\anum(G)$ & 1 & $\ds \frac{1}{2} \binom{2n}{n} $ & $0$\\
$\bnum(G)$ & 1 & $0$ & $\ds (-1)^n \binom{2n}{n}$ \\
$\cnum_{2i}(G)$ & $\delta_{i,0}$ & $\ds
\begin{cases}
\frac{1}{2} \binom{2n}{n} ,& \text{if $i=n$}\\
\binom{2n}{i} ,& \text{if $i<n$}
\end{cases}
$ & $\ds \binom{2n+1}{i}$\\
%$\snum(G;t)$ & $\ds \frac{(-1)^n}{2} \binom{2n}{n} + \sum_{k=1}^{n} (-1)^{n-k} \binom{2n}{n-k} t^{2k}$ & $\ds \sum_{k=0}^n (-1)^{n-k} \binom{2n+1}{n-k} t^{2k+1}$\\
\hline
\\[-9pt]
g.f. for $\snum(G;t)$ & \multicolumn{3}{c}{$\ds
\sum_{n\ge 0} \snum(C_{n}; t) x^{n}
=  \frac{1}{2} + \frac{1}{2\sqrt{1+4x^2}} \cdot \frac{(t^2+1) x + t\sqrt{1+4x^2}}{t-(t^2-1)x} $}
\end{tabular}
\caption{Numbers for cycle graphs $C_n$.}
\end{table}

Let $A_n$ be the \emph{$n$-th Euler zigzag number} for which the exponential generating function is
\begin{align}
\sum_{n\ge 0} A_n \frac{z^n}{n!} = \sec z + \tan z.
\label{eq:zigzag}
\end{align}

\begin{thm}
Let $K_n$ be the \emph{complete graph} with $n$ vertices. Then the exponential generating function for signed a-polynomials of $K_n$ is given by
\begin{align}
\sum_{n\ge 0} \snum(K_{n}; t) \frac{x^n}{n!} &= e^{tx} \sech x.
\label{eq:gfkn}
\end{align}
\end{thm}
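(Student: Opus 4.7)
The approach is to exploit the fact that $K_n$ is vertex-homogeneous: every induced subgraph $K_n|_{V'}$ on a $k$-vertex subset is again a complete graph $K_k$. Grouping the terms of \eqref{eq:sagt} by $k = |V'|$ immediately yields the binomial expansion
\begin{equation*}
\snum(K_n; t) = \sum_{k=0}^{n} \binom{n}{k}\, \snum(K_k)\, t^{n-k},
\end{equation*}
which is an exponential binomial convolution. Passing to exponential generating functions, it factors as
\begin{equation*}
\sum_{n \ge 0} \snum(K_n; t)\,\frac{x^n}{n!} \;=\; e^{tx} \cdot F(x), \qquad \text{where } F(x) := \sum_{n\ge 0} \snum(K_n)\,\frac{x^n}{n!}.
\end{equation*}
The theorem thus reduces to the identity $F(x) = \sech x$, or equivalently $\cosh(x)\,F(x) = 1$.

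To establish $\cosh(x)\,F(x) = 1$, I would extract the $x^n/n!$-coefficient on the left, which is $\sum_{k\text{ even}} \binom{n}{k}\, \snum(K_{n-k})$. For odd $n$ every term has $n-k$ odd, so $\snum(K_{n-k}) = 0$ and the coefficient vanishes. For $n = 2m$, reindexing by $j = n-k$ and noting that $\snum(K_j)$ vanishes automatically at odd $j$ lets us drop the parity restriction on $k$, turning the coefficient into
\begin{equation*}
\sum_{j=0}^{2m} \binom{2m}{j}\, \snum(K_j) \;=\; \sum_{V' \subseteq V(K_{2m})} \snum(K_{2m}|_{V'}).
\end{equation*}
For $m \ge 1$ the recursive branch of \eqref{eq:snum}, applied to the connected even graph $K_{2m}$, gives $\snum(K_{2m}) = -\sum_{V' \subsetneq V}\snum(K_{2m}|_{V'})$, so the displayed sum is zero; for $m = 0$ it equals $\snum(K_0) = 1$. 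Hence $\cosh(x)\,F(x) = 1$, which yields $F(x) = \sech x$ and establishes \eqref{eq:gfkn}.

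The main conceptual point, and the only one that requires any insight, is recognizing that the recursive definition of $\snum$ restricted to the family of complete graphs is precisely the coefficient-wise statement of the functional equation $\cosh(x)\,F(x) = 1$; once this is observed, the proof is a routine formal manipulation and no genuine obstacle arises.
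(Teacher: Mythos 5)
Your proof is correct, and its second half takes a genuinely different route from the paper's. The first step is common to both: since every induced subgraph of $K_n$ on $k$ vertices is $K_k$, the definition \eqref{eq:sagt} collapses to the binomial convolution $\snum(K_n;t)=\sum_k\binom{n}{k}\snum(K_k)t^{n-k}$, so the exponential generating function factors as $e^{tx}F(x)$ with $F(x)=\sum_n\snum(K_n)x^n/n!$. Where you diverge is in evaluating $F$. The paper imports the result $\snum(K_{2n})=(-1)^nA_{2n}$ from Theorem 2.8 of \cite{CP12} and then identifies $\sum_j A_{2j}(\imath x)^{2j}/(2j)!$ with $\sec(\imath x)=\sech x$ via the zigzag generating function \eqref{eq:zigzag}. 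You instead prove the functional equation $\cosh(x)F(x)=1$ directly from the defining recursion \eqref{eq:snum}: the coefficient of $x^{2m}/(2m)!$ in $\cosh(x)F(x)$ is $\sum_j\binom{2m}{j}\snum(K_j)=\sum_{V'\subseteq V}\snum(K_{2m}|_{V'})$, which vanishes for $m\ge 1$ by the recursive branch of the definition and equals $1$ for $m=0$, while odd coefficients vanish by the parity clause. Your argument is self-contained --- it needs neither the external computation of $\snum(K_{2n})$ nor the Euler zigzag numbers --- and as a by-product it reproves $\snum(K_{2n})=(-1)^nA_{2n}$ rather than assuming it. The paper's route is shorter given the cited input and makes the connection to the classical sequence $A_n$ explicit, which it then reuses in the tables and in the star-graph theorem.
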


\begin{proof}
From Theorem 2.8 in \cite{CP12}, it is known that
$$\snum (K_{2n}) =  (-1)^{n} A_{2n}.$$
Using the formula \eqref{eq:sagt}, we obtain
\begin{align}
\sum_{n\ge 0} \snum(K_{n}; t) \frac{x^n}{n!}
%&= \sum_{n\ge 0} \sum_{V \in [n]} \snum(K_n|_{V}) \ t^{n-\abs{V}} \frac{x^n}{n!} \\
&= \sum_{n\ge 0} \sum_{j=0}^{\lfloor n/2 \rfloor} \binom{n}{2j} \snum(K_{2j}) \ t^{n-2j} \frac{x^n}{n!} \notag \\
&= \sum_{k\ge 0} \sum_{j\ge 0} \binom{k+2j}{2j} (-1)^{j} A_{2j} \ t^{k} \frac{x^{k+2j}}{(k+2j)!} \notag \\
%&= \sum_{k\ge 0} \sum_{j\ge 0} \snum(K_{2j}) \ \frac{(tx)^{k}}{k!} \frac{x^{2j}}{(2j)!} \\
&= \left( \sum_{k\ge 0} \frac{(tx)^{k}}{k!} \right) \left( \sum_{j\ge 0}  A_{2j} \frac{(\imath x)^{2j}}{(2j)!} \right) \label{eq:gfK}
\end{align}
By \eqref{eq:zigzag}, the formula \eqref{eq:gfK} becomes the formula \eqref{eq:gfkn}.
%We are able to induce them from
%\begin{align*}
%\sum_{n \ge 0} \snum(K_{2n}) \frac{x^{2n}}{(2n)!} &= \sech x
%\end{align*}
%which are equivalent to
%\begin{align*}
%\sum_{n \ge 0} \anum(K_{2n}) \frac{x^{2n}}{(2n)!} &= \sec x
%\end{align*}
\end{proof}

\begin{table}[t]
\centering
\begin{tabular}{c||c|c|c}
$G$ & $K_0$ & $K_{2n}$  & $K_{2n+1}$ \\
\hline
$\snum(G)$ & 1 & $(-1)^n A_{2n}$ & $0$ \\
$\anum(G)$ & 1 & $A_{2n}$ & $0$\\
$\bnum(G)$ & 1 & $0$ & $(-1)^n A_{2n+1}$ \\
$\cnum_{2i}(G)$ & $\delta_{i,0}$ & $\ds \binom{2n}{2i} A_{2i}$ & $\ds \binom{2n+1}{2i} A_{2i}$\\
%$\snum(G;t)$ & $\ds \sum_{k=0}^n (-1)^{n-k} \binom{2n}{2(n-k)} A_{2(n-k)} t^{2k}$ & $\ds \sum_{k=0}^n (-1)^{n-k} \binom{2n+1}{2(n-k)} A_{2(n-k)} t^{2k+1}$\\
\hline
\\[-9pt]
e.g.f. for $\snum(G;t)$ & \multicolumn{3}{c}{$\ds \sum_{n\ge 0} \snum(K_{n}; t) \frac{x^n}{n!} = e^{tx} \sech(x)$}
\end{tabular}
\caption{Numbers for complete graphs $K_n$, where $\sum_{n \ge 0} A_n \frac{z^n}{n!}=\sec  z + \tan z$.}
\end{table}

\begin{rmk}
The Euler polynomials $E_n(t)$ is defined by the exponential generating function
$\sum_{n\ge 0} E_n\left(t \right) \frac{x^n}{n!} = \left( \frac{2}{e^{x} +1} \right)e^{xt }.$ See \cite[pp. 48]{Com74}.
Then it follows $$\snum(K_{n}; t) = E_n\left(\frac{t+1}{2} \right) 2^n$$ from
$\sum_{n\ge 0} \snum(K_{n}; t) \frac{x^n}{n!} = e^{tx} \sech x %= e^{tx} \frac{2e^x}{e^{2x}+1}
= \left( \frac{2}{e^{2x} +1} \right)e^{2x\left(\frac{t+1}{2}\right)} = \sum_{n\ge 0} E_n\left(\frac{t+1}{2} \right) \frac{(2x)^n}{n!}.
$\end{rmk}

\begin{thm}
Let $K_{1,n}$ be the \emph{star graph} with $n+1$ vertices, which is a tree with at least one vertex of degree $n$. Then the exponential generating function for signed a-polynomials of $K_{1,n}$ is given by
\begin{align}
\sum_{n\ge 0} \snum(K_{1,n}; t) \frac{x^{n}}{n!} &=  e^{tx} (t-\tanh x).
\label{eq:gfk1n}
\end{align}
\end{thm}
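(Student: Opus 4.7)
The plan is to follow the same template as the complete-graph proof of \eqref{eq:gfkn}: first obtain an explicit formula for $\cnum_{2i}(K_{1,n})$, substitute into \eqref{eq:formula}, and then collect the resulting double sum over $(n,i)$ as a product of two exponential generating functions.

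The combinatorial input is a classification of induced subgraphs of $K_{1,n}$. Writing $v_0$ for the central vertex, any induced subgraph on $V' \subseteq V(K_{1,n})$ is either a disjoint union of isolated vertices (if $v_0 \notin V'$), whose $\anum$ vanishes unless $V' = \emptyset$; or a smaller star $K_{1,|V'|-1}$ (if $v_0 \in V'$). Consequently the only induced subgraphs contributing to $\cnum_{2i}$ with $i \ge 1$ are the $\binom{n}{2i-1}$ stars $K_{1,2i-1}$. Combined with the Choi--Park evaluation $\anum(K_{1,2n-1}) = A_{2n-1}$, the star-graph analogue of their formula for $K_{2n}$, this yields
$$\cnum_0(K_{1,n}) = 1, \qquad \cnum_{2i}(K_{1,n}) = \binom{n}{2i-1}A_{2i-1} \quad (i \ge 1).$$

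Substituting into \eqref{eq:formula} with $|V(K_{1,n})| = n+1$, multiplying by $x^n/n!$, and re-indexing via $n = (2j-1) + k$ so that $\binom{n}{2j-1}/n! = 1/((2j-1)!\,k!)$, the double sum factors as
$$\sum_{n \ge 0}\snum(K_{1,n}; t)\frac{x^n}{n!} = t\,e^{tx} + e^{tx} \sum_{j \ge 1} (-1)^j A_{2j-1}\frac{x^{2j-1}}{(2j-1)!}.$$
It then remains to recognize the tail series as $-\tanh x$: substituting $z \mapsto ix$ in \eqref{eq:zigzag} and isolating the odd (tangent) part gives $\tan(ix) = \sum_{j \ge 0} A_{2j+1}(ix)^{2j+1}/(2j+1)! = i\tanh x$, hence $\sum_{j \ge 0}(-1)^j A_{2j+1} x^{2j+1}/(2j+1)! = \tanh x$, and shifting the index produces the required sign. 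Identity \eqref{eq:gfk1n} follows.

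The main delicate point is simply bookkeeping: isolating the degenerate $V' = \emptyset$ contribution (which produces the leading $t^{n+1}$ summand separately from the $i \ge 1$ terms) and tracking the sign conversion between the odd Euler zigzag numbers $A_{2j-1}$ and the Taylor coefficients of $\tanh x$. Beyond that, the argument is structurally a direct analogue of the computation used for complete graphs.
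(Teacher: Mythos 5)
Your proof is correct and is essentially the paper's own argument: the authors likewise classify the induced subgraphs of $K_{1,n}$ (empty set, isolated leaves contributing zero, and the $\binom{n}{2j+1}$ substars $K_{1,2j+1}$ with $\snum(K_{1,2j+1})=(-1)^{j+1}A_{2j+1}$ from Choi--Park), perform the same reindexing $n=k+(2j+1)$ to factor off $e^{tx}$, and identify the odd part of \eqref{eq:zigzag} at $z=\imath x$ with $\imath\tanh x$. Working through $\cnum_{2i}$ and \eqref{eq:formula} rather than directly through \eqref{eq:sagt} is only a cosmetic difference.
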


\begin{proof}
From Theorem 2.9 in \cite{CP12}, it is known that
$$\snum (K_{1,2n+1}) =  (-1)^{n+1} A_{2n+1}.$$
Using the formula \eqref{eq:sagt}, we obtain
\begin{align}
\sum_{n\ge 0} \snum(K_{1,n}; t) \frac{x^n}{n!}
&= \sum_{n\ge 0} \left(\snum(\emptyset) t^{n+1} + \sum_{j=0}^{\lfloor n/2 \rfloor} \binom{n}{2j+1} \snum(K_{1,2j+1}) \ t^{n-(2j+1)} \right) \frac{x^n}{n!} \notag \\
&= \sum_{n\ge 0} t^{n+1} \frac{x^n}{n!}  + \sum_{k\ge 0} \sum_{j\ge 0} \binom{k+2j+1}{2j+1} (-1)^{j+1} A_{2j+1} \ t^{k} \frac{x^{k+2j+1}}{(k+2j+1)!} \notag \\
&=  t \left( \sum_{n\ge 0} \frac{(tx)^n}{n!} \right)  +  \left( \sum_{k\ge 0} \frac{(tx)^{k}}{k!} \right)  \left( \imath \sum_{j\ge 0}  A_{2j+1} \frac{(\imath x)^{2j+1}}{(2j+1)!} \right) \label{eq:gfK1}
\end{align}
By \eqref{eq:zigzag}, it follows $$\sum_{j\ge 0}  A_{2j+1} \frac{(\imath x)^{2j+1}}{(2j+1)!} = \tan(\imath x)= \imath \tanh x$$
and the formula \eqref{eq:gfK1} becomes the formula \eqref{eq:gfk1n}.
%
%We are able to induce them from
%\begin{align*}
%\sum_{n \ge 1} \snum(K_{1,2n-1}) \frac{x^{2n-1}}{(2n-1)!} &= - \tanh x
%\end{align*}
%which are equivalent to
%\begin{align*}
%\sum_{n \ge 1} \anum(K_{1,2n-1}) \frac{x^{2n-1}}{(2n-1)!} &= \tan x
%\end{align*}
\end{proof}

\begin{table}[t]
\centering
\begin{tabular}{c||c|c}
$G$ & $K_{1,2n-1}$  & $K_{1,2n}$ \\
\hline
$\snum(G)$ & $(-1)^n A_{2n-1}$ & $0$ \\
$\anum(G)$ & $A_{2n-1}$ & $0$\\
$\bnum(G)$ & $0$ & $(-1)^n A_{2n}$ \\
$\cnum_{2i}(G)$
& $\begin{cases} \binom{2n-1}{2i-1} A_{2i-1},& \text{if $i>0$}\\ 1,& \text{if $i=0$} \end{cases}$
& $\begin{cases} \binom{2n}{2i-1} A_{2i-1}  ,& \text{if $i>0$}\\ 1,& \text{if $i=0$} \end{cases}$\\
\hline
\\[-9pt]
e.g.f. for $\snum(G;t)$ & \multicolumn{2}{c}{$\ds \sum_{n\ge 0} \snum(K_{1,n}; t) \frac{x^{n}}{n!} =  e^{tx} (t-\tanh x)$}
\end{tabular}
\caption{Numbers for star graphs $K_{1,n-1}$, where $\sum_{n \ge 0} A_n \frac{z^n}{n!} = \sec z + \tan z$.}
\end{table}

Since $\snum(G) = \snum(G;0)$, putting $t = 0$ in the generating functions \eqref{eq:gfpn}, \eqref{eq:gfcn}, \eqref{eq:gfkn}, and \eqref{eq:gfk1n} yields the generating functions for signed a-numbers of path graphs, cycle graphs, complete graphs, and star graphs as follows:
\begin{align*}
\sum_{n\ge 0} \snum(P_{n}) x^{n} &= \frac{-1+\sqrt{1+4x^2}}{2x^2} = \sum_{m\ge 0} (-1)^m \Cat_m x^{2m} ,\\
\sum_{n\ge 0} \snum(C_{n}) x^{n} &=  \frac{1}{2} + \frac{1}{2\sqrt{1+4x^2}} = 1 + \sum_{m\ge 1} \frac{(-1)^m}{2} \binom{2m}{m} x^{2m} ,\\
\sum_{n\ge 0} \snum(K_{n}) \frac{x^n}{n!} &= \sech x =  \sum_{m\ge 0} (-1)^m A_{2m} \frac{x^{2m}}{(2m)!},\\
\sum_{n\ge 0} \snum(K_{1,n}) \frac{x^{n}}{n!} &=  - \tanh x = \sum_{m\ge 1} (-1)^{m} A_{2m-1} \frac{x^{2m-1}}{(2m-1)!}.
\end{align*}

Similarly, since $\bnum(G) = \snum(G;1)$, putting $t = 1$ in the generating functions \eqref{eq:gfpn}, \eqref{eq:gfcn}, \eqref{eq:gfkn}, and \eqref{eq:gfk1n} yields the generating functions for b-numbers of path graphs, cycle graphs, complete graphs, and star graphs as follows:
\begin{align*}
\sum_{n\ge 0} \bnum(P_{n}) x^{n} &= 1 + \frac{-1 +\sqrt{1+4x^2}}{2x} = 1 + \sum_{m\ge0} (-1)^m \Cat_m x^{2m+1} ,\\
\sum_{n\ge 0} \bnum(C_{n}) x^{n} &=  1 + \frac{x}{\sqrt{1+4x^2}} = 1 + \sum_{m\ge0} (-1)^m \binom{2m}{m} x^{2m+1} ,\\
\sum_{n\ge 0} \bnum(K_{n}) \frac{x^n}{n!} &= 1+ \tanh x = 1 + \sum_{m\ge 0} (-1)^m A_{2m+1} \frac{x^{2m+1}}{(2m+1)!},\\
\sum_{n\ge 0} \bnum(K_{1,n}) \frac{x^{n}}{n!} &=  \sech x = \sum_{m\ge 0} (-1)^m A_{2m} \frac{x^{2m}}{(2m)!}.
\end{align*}

%\begin{align*}
%\sum_{n\ge 0} \snum(P_{2n}; t) {x^{2n}} & = \frac{-(t^2+1) - (t^2-1) \sqrt{1+4x^2}}{-2t^2 + 2(t^2-1)^2x^2}, \label{eq:gfp2n}\\
%\sum_{n\ge 0} \snum(P_{2n+1}; t) {x^{2n+1}} & = \frac{t}{x}\cdot \frac{1-2(t^2-1)x^2 - \sqrt{1+4x^2}}{-2t^2 + 2(t^2-1)^2x^2},
%\end{align*}
%\begin{align*}
%\sum_{n\ge 0} \snum(C_{2n}; t) x^{2n} &=  \frac{1}{2} + \frac{1}{2\sqrt{1+4x^2}} \cdot \frac{-(t^4-1)x^2 + t^2\sqrt{1+4x^2}}{t^2-(t^2-1)^2 x^2},\\
%\sum_{n\ge 0} \snum(C_{2n+1}; t) x^{2n+1} &=   \frac{tx}{2\sqrt{1+4x^2}} \cdot \frac{(t^2+1) - (t^2-1)\sqrt{1+4x^2}}{t^2-(t^2-1)^2 x^2},
%\end{align*}
%\begin{align*}
%\sum_{n \ge 0} \snum(K_{2n};t) \frac{x^{2n}}{(2n)!} &= \cosh tx \sech x,\\
%\sum_{n \ge 0} \snum(K_{2n+1};t) \frac{x^{2n+1}}{(2n+1)!} &= \sinh tx \sech x,
%\end{align*}
%\begin{align*}
%\sum_{n\ge 0} \snum(K_{1,2n-1}; t) \frac{x^{2n-1}}{(2n-1)!} &=  t\,\sinh tx -\cosh tx \,\tanh x,\\
%\sum_{n\ge 0} \snum(K_{1,2n}; t)\frac{x^{2n}}{(2n)!} &=   t\,\cosh tx -\sinh tx \,\tanh x.
%\end{align*}

According to \eqref{eq:convert}, putting $t  \leftarrow \frac{1}{\sqrt{-z}}$ and $x \leftarrow x \sqrt{-z}$ in the generating functions \eqref{eq:gfpn}, \eqref{eq:gfcn}, \eqref{eq:gfkn}, and \eqref{eq:gfk1n} yields the next result.
\begin{cor}
%Let $P_n$ be the \emph{path graph} with $n$ vertices.
Let $P_{M(G)}(z)$ denote the Poincar\'e polynomials of the real toric manifolds $M(G)$ associated to  the graph $G$. Then the generating functions for Poincar\'e polynomials of the real toric manifolds  associated to path graphs $P_n$, cycle graphs $C_n$, complete graphs $K_n$, and star graphs $K_{1,n}$ are as follows:
\begin{align*}
\sum_{n\ge 0} P_{M(P_n)}(z) {x^{n}} &= \frac{-1+2x+ \sqrt{1-4zx^2}}{2x - 2(1+z)x^2},\\
\sum_{n\ge 0} P_{M(C_n)}(z) {x^{n}} &=  \frac{1}{2} + \frac{1}{2\sqrt{1-4zx^2}} \cdot \frac{(1-z) x + \sqrt{1-4zx^2}}{1-(1+z)x},\\
\sum_{n\ge 0} P_{M(K_n)}(z) \frac{x^n}{n!} &= e^{x} \sec (x\sqrt{z})\\
\sum_{n\ge 0} P_{M(K_{1,n})}(z) \frac{x^n}{n!} &= e^{x}\left(1+ \sqrt{z}\tan( x\sqrt{z})  \right).
\end{align*}
\end{cor}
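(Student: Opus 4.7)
The plan is to apply the substitution $t \mapsto 1/\sqrt{-z}$, $x \mapsto x\sqrt{-z}$ to each of the four generating functions \eqref{eq:gfpn}, \eqref{eq:gfcn}, \eqref{eq:gfkn}, \eqref{eq:gfk1n} and to simplify. The reason this works is a term-by-term bookkeeping argument from \eqref{eq:convert}: if $G_n$ has $n$ vertices, then the coefficient $\sa(G_n;t) x^n$ is sent to
$$
\sa(G_n; 1/\sqrt{-z})\,(x\sqrt{-z})^n = (\sqrt{-z})^n\, \sa(G_n;1/\sqrt{-z})\, x^n = P_{M(G_n)}(z)\,x^n,
$$
and likewise for exponential coefficients. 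Thus for path graphs, cycle graphs, and complete graphs (where $G_n$ literally has $n$ vertices) the substitution is all that is required. For star graphs there is an extra vertex (the center), so $|V(K_{1,n})| = n+1$ and one must multiply the substituted series by an additional factor of $\sqrt{-z}$ to match the exponent in \eqref{eq:convert}.

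Next I would record the elementary simplification identities that carry the four right-hand sides to their new form:
\begin{align*}
tx &\longmapsto x, & t^2 &\longmapsto -1/z, & t^2-1 &\longmapsto -(1+z)/z, \\
\sqrt{1+4x^2} &\longmapsto \sqrt{1-4zx^2}, & \cosh(x\sqrt{-z}) &= \cos(x\sqrt{z}), & \tanh(x\sqrt{-z}) &= \imath \tan(x\sqrt{z}).
\end{align*}
With these in hand, \eqref{eq:gfkn} and \eqref{eq:gfk1n} transform almost immediately: $e^{tx}\sech x \mapsto e^{x}\sec(x\sqrt{z})$, and $e^{tx}(t-\tanh x) \mapsto e^{x}\bigl(1/\sqrt{-z} - \imath\tan(x\sqrt{z})\bigr)$ which, multiplied by $\sqrt{-z}$ (the star-graph correction) and using $-\imath\sqrt{-z}=\sqrt{z}$, yields $e^{x}\bigl(1+\sqrt{z}\tan(x\sqrt{z})\bigr)$.

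For \eqref{eq:gfpn} and \eqref{eq:gfcn} the simplification is pure algebra on radicals. The numerator of the path-graph formula becomes $-1+2x+\sqrt{1-4zx^2}$ and the denominator becomes $2x-2(1+z)x^2$, matching the first line of the corollary. For the cycle graph I would clear the factors of $\sqrt{-z}$ appearing in both numerator and denominator of the second fraction: using the identity $\sqrt{-z} = -z/\sqrt{-z}$, the numerator $(t^2+1)x+t\sqrt{1+4x^2}$ collapses to $\bigl((1-z)x+\sqrt{1-4zx^2}\bigr)/\sqrt{-z}$ and the denominator $t-(t^2-1)x$ to $\bigl(1-(1+z)x\bigr)/\sqrt{-z}$, so the $\sqrt{-z}$ factors cancel in the ratio and the stated expression falls out.

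The only step that requires any care at all is the star-graph case, where one must remember the vertex-count shift (and track the branch choices $\sqrt{-z}=\imath\sqrt{z}$ consistently); everything else is the direct substitution described above.
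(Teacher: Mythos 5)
Your proposal is correct and is exactly the paper's approach: the paper derives the corollary by the single substitution $t \leftarrow 1/\sqrt{-z}$, $x \leftarrow x\sqrt{-z}$ in \eqref{eq:gfpn}--\eqref{eq:gfk1n} via \eqref{eq:convert}. You in fact supply more detail than the paper does, correctly handling the extra factor of $\sqrt{-z}$ forced by $|V(K_{1,n})|=n+1$ in the star-graph case, which the paper leaves implicit.
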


%%%%%%%%%%%%%%%%%%%%%%%%%%%%%%%%%%%%%%%%%%%%%%%
\section{Signed a-number of complete multipartite graphs}
%Let $A_{p,q} = \anum(K_{p,q})$. The generating function $A(x,y)$ is defined by
%$$A(x,y) = \sum_{p \ge 0} \sum_{q \ge 0} A_{p,q} \frac{x^p}{p!} \frac{y^q}{q!}.$$
Firstly, we consider the exponential generating function for signed a-numbers of complete bipartite graphs.
Denote by $K_{p,q}$ the \emph{complete bipartite graph} with $p$-set and $q$-set.
\begin{thm}
The exponential generating function for signed a-numbers of complete bipartite graphs is
\begin{align}
\sum_{p \ge 0} \sum_{q \ge 0} \snum(K_{p,q}) \frac{x^p}{p!} \frac{y^q}{q!} = \frac{\cosh x + \cosh y - 1}{\cosh(x+y)}.
\label{eq:snum_bipartite}
\end{align}
\end{thm}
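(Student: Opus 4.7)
The plan is to convert the defining recursion for $\snum$ into a bivariate EGF identity by multiplying $F(x,y) := \sum_{p,q\ge 0}\snum(K_{p,q})\tfrac{x^p y^q}{p!\,q!}$ by $e^{x+y}$, and then to isolate the relevant information by multiplying by $\cosh(x+y)$, exploiting a parity symmetry of $F$. First I would observe that when $p,q\ge 1$ the graph $K_{p,q}$ is connected, and that its induced subgraphs on $p'$ vertices from one side and $q'$ vertices from the other are isomorphic to $K_{p',q'}$ and number $\binom{p}{p'}\binom{q}{q'}$. Applying the identity $\sum_{V'\subseteq V}\snum(G|_{V'}) = 0$ (valid for nonempty $G$ with every component of even order) to $G = K_{p,q}$ when $p,q\ge 1$ and $p+q$ is even gives the recursion
\begin{equation*}
\sum_{p'=0}^{p}\sum_{q'=0}^{q}\binom{p}{p'}\binom{q}{q'}\snum(K_{p',q'}) = 0.
\end{equation*}

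Next I would note two elementary facts: $\snum(K_{p,q})=0$ whenever $p+q$ is odd (then $K_{p,q}$ either is connected on an odd number of vertices or has a one-vertex component), and $\snum(K_{0,0})=1$ with $\snum(K_{p,0})=\snum(K_{0,q})=0$ for $p,q\ge 1$. The first fact gives the parity symmetry $F(-x,-y)=F(x,y)$. By the binomial convolution rule, the coefficient of $\tfrac{x^p y^q}{p!\,q!}$ in $e^{x+y}F(x,y)$ equals
\begin{equation*}
A_{p,q} := \sum_{p'=0}^{p}\sum_{q'=0}^{q}\binom{p}{p'}\binom{q}{q'}\snum(K_{p',q'}),
\end{equation*}
and the facts above yield $A_{p,0}=A_{0,q}=1$ for all $p,q\ge 0$, while the recursion forces $A_{p,q}=0$ for $p,q\ge 1$ with $p+q$ even. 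Symmetrising and using $F(-x,-y)=F(x,y)$,
\begin{equation*}
\cosh(x+y)\,F(x,y) = \tfrac{1}{2}\bigl[e^{x+y}F(x,y) + e^{-(x+y)}F(-x,-y)\bigr] = \sum_{p+q\text{ even}} A_{p,q}\,\tfrac{x^p y^q}{p!\,q!},
\end{equation*}
which from the values of $A$ equals $1 + (\cosh x - 1) + (\cosh y - 1) = \cosh x + \cosh y - 1$. Dividing by $\cosh(x+y)$ gives the claim.

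The main obstacle, or rather the key insight that makes the proof work, is recognising that the recursion by itself pins down only the even-parity coefficients of $e^{x+y}F$ (the odd-parity ones are a priori undetermined), and that the parity symmetry $F(-x,-y)=F(x,y)$ lets one eliminate the undetermined coefficients entirely by multiplying by $\cosh(x+y)$ rather than attempting to invert $e^{x+y}$ directly.
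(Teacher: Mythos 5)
Your proof is correct and follows essentially the same route as the paper: both derive the recurrence $\sum_{p',q'}\binom{p}{p'}\binom{q}{q'}\snum(K_{p',q'})=0$ (for $p,q\ge 1$, $p+q$ even) from the defining relation of $\snum$, and both extract the even-total-degree part of the convolution to obtain $\cosh(x+y)F(x,y)=\cosh x+\cosh y-1$. Your symmetrization via $F(-x,-y)=F(x,y)$ is just a repackaging of the paper's direct restriction of the double sum to $p+q$ even using the vanishing of $\snum(K_{i,j})$ for $i+j$ odd.
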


\begin{proof}
For two nonnegative integers $p$ and $q$ whose sum is even, there is the recurrence
\begin{align}
\sum_{i, j\ge 0} {p \choose i } {q \choose j} \snum(K_{i,j}) =
\begin{cases}
0 & \text{if $p$ and $q$ are positive,}\\
1 & \text{if $p$ or $q$ is zero.}
\end{cases}\label{eq:recur}
\end{align}
%Multiplying both sides by $(-1)^{(p+q)/2}$, we have
%\begin{align}\label{eq:recur}
%\sum_{i+j=\text{even}} {p \choose i } {q \choose j} (-1)^{(p+q)/2} \snum(K_{i,j}) =
%\begin{cases}
%0 & \text{if $p$ and $q$ are positive,}\\
%(-1)^{(p+q)/2} & \text{if $p$ or $q$ is zero.}
%\end{cases}
%\end{align}
The exponential generating function for right-hand side of \eqref{eq:recur} is
\begin{align}\label{eq:rhs}
\sum_{{p,q \ge 0} \atop {p+q=\text{even}}} (RHS) ~ \frac{x^p}{p!} \frac{y^q}{q!} = 1 + (\cosh x - 1) + (\cosh y-1).
\end{align}
The exponential generating function for left-hand side of \eqref{eq:recur} is
\begin{align}
\sum_{{p,q \ge 0} \atop {p+q=\text{even}}} (LHS) ~ \frac{x^p}{p!} \frac{y^q}{q!}
&= \sum_{{p,q \ge 0} \atop {p+q=\text{even}}} \sum_{{{0\le i \le p} \atop {0\le j \le q}} \atop {i+j=\text{even}}}
\left(\snum(K_{i,j}) \frac{x^i}{i!} \frac{y^j}{j!}\right) \left( \frac{x^{p-i}}{(p-i)!} \frac{y^{q-j}}{(q-j)!} \right) \notag \\
&= \left(\sum_{{i,j \ge 0} \atop {i+j=\text{even}}} \snum(K_{i,j}) \frac{x^i}{i!} \frac{y^j}{j!}\right)
 \left( \sum_{{i,j \ge 0} \atop {i+j=\text{even}}} \frac{x^i}{i!} \frac{y^j}{j!}\right) \notag \\
&= \left(\sum_{{p,q \ge 0}} \snum(K_{p,q}) \frac{x^p}{p!} \frac{y^q}{q!}\right) \cosh(x+y). \label{eq:lhs}
\end{align}
Thus, by \eqref{eq:rhs} and \eqref{eq:lhs}, we are done.
\end{proof}

%From \eqref{eq:snum_bipartite}, the exponential generating function for a-numbers of complete bipartite graphs is
%\begin{align}\label{eq:anum_bipartite}
%\sum_{p \ge 0} \sum_{q \ge 0} \anum(K_{p,q}) \frac{x^p}{p!} \frac{y^q}{q!}
%&= \sum_{p \ge 0} \sum_{q \ge 0} \snum(K_{p,q}) \frac{(\imath x)^p}{p!} \frac{(\imath y)^q}{q!} \notag\\
%&= \frac{\cosh(\imath x) + \cosh (\imath y) - 1}{\cosh(\imath x+\imath y)}
%= \frac{\cos x + \cos y - 1}{\cos(x+y)}.
%\end{align}
%The taylor series of \eqref{eq:anum_bipartite} is
%\begin{multline}
%\sum_{p,q \ge 0} \anum(K_{p,q}) \frac{x^p}{p!} \frac{y^q}{q!} =
%1+\frac{x}{1!}\frac{y}{1!}
%+\left(
%2\frac{{x}^{3}}{3!}\frac{y}{1!}
%+3\frac{{x}^{2}}{2!}\frac{{y}^{2}}{2!}
%+2\frac{x}{1!}\frac{{y}^{3}}{3!}
%\right)\\
%+\left(
%16\frac{{x}^{5}}{5!}\frac{y}{1!}
%+27\frac{{x}^{4}}{4!}\frac{{y}^{2}}{2!}
%+31\frac{{x}^{3}}{3!}\frac{{y}^{3}}{3!}
%+27\frac{{x}^{2}}{2!}\frac{{y}^{4}}{4!}
%+16\frac{x}{1!}\frac{{y}^{5}}{5!}
%\right) + \cdots.
%\end{multline}
The generating function $SA_q(x)$ is defined by
$SA_q(x)= \sum_{p \ge 0} \snum(K_{p,q}) \frac{x^p}{p!},$
which is the coefficient of $y^q/q!$ in $\frac{\cosh x + \cosh y - 1}{\cosh(x+y)}$.
Given a fixed nonnegative $q$, we can induce the detailed formula $SA_q(x)$ by $$SA_q(x) = \left. \frac{\partial^q}{\partial y ^q} \left( \frac{\cosh x + \cosh y - 1}{\cosh(x+y)} \right) \right|_{y=0}.$$
For example, the initial generating functions $A_q(x)$ are listed as follows:
\begin{align*}
SA_0(x) &= 1,\\
SA_1(x) &= - \tanh x,\\
SA_2(x) &= - 2 \sech^2 x + \sech x+1,\\
SA_3(x) &= (6 \sech^2 x - 3 \sech x-1) \tanh x, \\
SA_4(x) &= 24 \sech^4 x -12 \sech^3 x-20\sech^2 x+7 \sech x+1.
\end{align*}
%The table for $\snum(K_{p,q})$ is shown in Table~\ref{fig:table}.
%\begin{table}[t]
%\centering
%\begin{tabular}{c|cccccccc}
%$p\backslash q$&0&1&2&3&4&5&6&7\\ \hline
%0&1&0&0&0&0&0&0&0\\
%1&0&$-1$&0&2&0&$-16$&0&272\\
%2&0&0&3&0&$-27$&0&483&0\\
%3&0&2&0&$-31$&0&617&0&$-19381$\\
%4&0&0&$-27$&0&663&0&$-22407$&0\\
%5&0&$-16$&0&617&0&$-23431$&0&1191587\\
%6&0&0&483&0&$-22407$&0&1228563&0\\
%7&0&272&0&$-19381$&0&1191587&0&$-89361931$
%\end {tabular}
%\caption{Table for $\snum(K_{p,q})$}
%\label{fig:table}
%\end{table}

Next, we generalize the generating function \eqref{eq:snum_bipartite} for complete multipartite graphs.
Denote by $K_{p_1,\dots,p_m}$ the \emph{complete $m$-partite graph} with $p_1$-set, \dots, $p_m$-set.
\begin{thm}
The exponential generating function for signed a-numbers of complete $m$-partite graphs is
\begin{align}
\sum_{p_1,\dots, p_m \ge 0} \snum(K_{p_1,\dots ,p_m}) \frac{x_1^{p_1}}{p_1!} \cdots \frac{x_m^{p_m}}{p_m!} = \frac{(1-m) + \cosh x_1 + \dots + \cosh x_m}{\cosh(x_1+\dots +x_m)}.
\label{eq:snum_multipartite}
\end{align}
\end{thm}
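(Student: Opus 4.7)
The plan is to extend the bipartite argument almost verbatim to $m$ parts. First I would establish the multipartite analogue of the recurrence~\eqref{eq:recur}: for nonnegative integers $p_1,\ldots,p_m$ whose sum is even,
\begin{equation*}
\sum_{i_1,\ldots,i_m \ge 0} \left(\prod_{j=1}^m \binom{p_j}{i_j}\right) \snum(K_{i_1,\ldots,i_m}) =
\begin{cases}
0, & \text{if at least two of the $p_j$'s are positive,} \\
1, & \text{otherwise.}
\end{cases}
\end{equation*}
When two or more $p_j$'s are positive, $K_{p_1,\ldots,p_m}$ is connected on an even number of vertices, so $\sum_{V'\subseteq V}\snum(G|_{V'})=0$ by the observation recorded in the preliminaries; collecting induced subgraphs by the number $i_j$ of vertices used from the $j$-th part produces the displayed left-hand side. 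When at most one $p_j$ is positive, the graph is edgeless, and the only subset contributing a nonzero signed a-number is the empty one.

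Next I would pass to exponential generating functions. Restricted to tuples with $\sum p_j$ even, the right-hand side sums to
\begin{equation*}
1 + \sum_{j=1}^m (\cosh x_j - 1) = (1-m) + \sum_{j=1}^m \cosh x_j,
\end{equation*}
where the $1$ comes from the all-zero tuple and the $j$-th summand collects those tuples in which only $p_j$ is positive (and necessarily even). Writing $F(x_1,\ldots,x_m) = \sum_{i_1,\ldots,i_m\ge 0}\snum(K_{i_1,\ldots,i_m})\prod_j \frac{x_j^{i_j}}{i_j!}$ for the target series and applying a standard binomial convolution, the \emph{unrestricted} exponential generating function of the left-hand side factors as $F(x_1,\ldots,x_m)\cdot e^{x_1+\cdots+x_m}$.

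The last step is a parity projection, which is the only conceptual addition beyond the bipartite case. Since $\snum(K_{i_1,\ldots,i_m})=0$ whenever $\sum_j i_j$ is odd---such a graph is either disconnected with an isolated odd component or connected of odd order---the series $F$ is invariant under the simultaneous substitution $x_j\mapsto -x_j$. Averaging the factored identity with its sign-flipped counterpart therefore replaces $e^{x_1+\cdots+x_m}$ by $\cosh(x_1+\cdots+x_m)$ on the left while retaining exactly the $\sum p_j$ even part on the right, giving
\begin{equation*}
F(x_1,\ldots,x_m)\cdot \cosh(x_1+\cdots+x_m) = (1-m) + \sum_{j=1}^m \cosh x_j,
\end{equation*}
which rearranges to~\eqref{eq:snum_multipartite}. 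I do not anticipate any obstacle beyond bookkeeping: the recurrence and the parity projection are the same tools used for $m=2$, and everything else is a routine multivariable repackaging.
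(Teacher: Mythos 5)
Your proposal is correct and follows essentially the same route as the paper: the same recurrence $\sum_{i_1,\ldots,i_m}\prod_j\binom{p_j}{i_j}\snum(K_{i_1,\ldots,i_m})\in\{0,1\}$ for even total, derived from $\sum_{V'\subseteq V}\snum(G|_{V'})=0$ for connected even-order graphs, followed by exponential generating functions. The only cosmetic difference is that the paper splits the parity-restricted convolution directly into $F\cdot\cosh(x_1+\cdots+x_m)$ using the vanishing of $\snum$ on odd totals, whereas you reach the same factorization by forming the unrestricted convolution $F\cdot e^{x_1+\cdots+x_m}$ and averaging over $x_j\mapsto-x_j$; both steps use the identical parity fact.
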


\begin{proof}
For $m$ nonnegative integers $p_1, \dots, p_m$ whose sum is even, there is the recurrence
\begin{align}\label{eq:m-recur}
\sum_{i_1,\dots,i_m\ge 0} {p_1 \choose i_1 } \dots {p_m \choose i_m} \snum (K_{i_1,\dots,i_m}) = \begin{cases}
0 & \text{if at least two $p_i$'s are positive,}\\
1 & \text{if all $p_i$'s are zeros, but at most one.}
\end{cases}
\end{align}

Using both sides of \eqref{eq:m-recur}, we have the generalized formulae of \eqref{eq:rhs} and \eqref{eq:lhs} as follows:
$$\sum_{{p_i \ge 0} \atop {p_1+\dots+p_m=\text{even}}} (RHS) ~\frac{x_1^{p_1}}{{p_1}!} \dots \frac{x_m^{p_m}}{{p_m}!}  = 1 +  (\cosh x_1 -1 ) + \dots + (\cosh x_m -1)$$
and
$$\sum_{{p_i \ge 0} \atop {p_1+\dots+p_m=\text{even}}} (LHS) ~\frac{x_1^{p_1}}{{p_1}!} \dots \frac{x_m^{p_m}}{{p_m}!}
= \left( \sum_{p_1,\dots, p_m \ge 0} \snum(K_{p_1,\dots ,p_m}) \frac{x_1^{p_1}}{p_1!} \cdots \frac{x_m^{p_m}}{p_m!} \right)  \cosh(x_1+\dots +x_m),$$
which completes the proof.
\end{proof}

\begin{rmk}
Obviously, the exponential generating functions for a-numbers of complete bipartite graphs and complete $m$-partite graphs are equal to
\begin{align*}
\sum_{p \ge 0} \sum_{q \ge 0} \anum(K_{p,q}) \frac{x^p}{p!} \frac{y^q}{q!} &= \frac{\cos x + \cos y - 1}{\cos(x+y)},\\
\sum_{p_1,\dots, p_m \ge 0} \anum(K_{p_1,\dots ,p_m}) \frac{x_1^{p_1}}{p_1!} \cdots \frac{x_m^{p_m}}{p_m!} &= \frac{(1-m) + \cos x_1 + \dots + \cos x_m}{\cos(x_1+\dots +x_m)}.
\end{align*}
\end{rmk}

\section{Signed a-polynomial of complete multipartite graphs}
Firstly, we consider the exponential generating function for signed a-polynomials of complete bipartite graphs.
\begin{thm}
Let $K_{p,q}$ be the complete bipartite graph with $p$-set and $q$-set. Then the exponential generating function for signed a-polynomials of $K_{p,q}$ is given by
\begin{align}
\sum_{p\ge 0}\sum_{q\ge 0} \snum(K_{p,q}; t) \frac{x^{p}}{p!}\frac{y^q}{q!}
&= e^{t(x+y)} \left( \frac{\cosh x + \cosh y -1}{\cosh(x+y)} \right).
\label{eq:gfkpq}
\end{align}
\end{thm}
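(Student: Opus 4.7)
The plan is to reduce this identity to the previously established generating function for the signed a-\emph{numbers} of complete bipartite graphs, namely
\[
\sum_{i,j \ge 0} \snum(K_{i,j}) \frac{x^i}{i!} \frac{y^j}{j!} = \frac{\cosh x + \cosh y - 1}{\cosh(x+y)},
\]
together with a routine binomial-convolution argument coming directly from the definition \eqref{eq:sagt} of $\snum(G;t)$.

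The first step is to observe that any induced subgraph of $K_{p,q}$ is itself complete bipartite: if $V'$ consists of $i$ vertices from the $p$-part and $j$ vertices from the $q$-part, then $K_{p,q}|_{V'} \cong K_{i,j}$. Applying \eqref{eq:sagt}, this gives
\[
\snum(K_{p,q}; t) = \sum_{i=0}^{p} \sum_{j=0}^{q} \binom{p}{i}\binom{q}{j} \snum(K_{i,j})\, t^{(p-i)+(q-j)}.
\]

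The second step is to recognize the right-hand side, as a function of $p$ and $q$, as a double binomial convolution of the sequences $\snum(K_{i,j})$ and $t^{i+j}$. Passing to exponential generating functions in both variables, this convolution becomes an ordinary product:
\[
\sum_{p,q \ge 0} \snum(K_{p,q}; t)\, \frac{x^{p}}{p!} \frac{y^{q}}{q!}
= \left(\sum_{i,j \ge 0} \snum(K_{i,j}) \frac{x^i}{i!} \frac{y^j}{j!} \right) \cdot
\left(\sum_{p,q \ge 0} t^{p+q} \frac{x^p}{p!} \frac{y^q}{q!} \right).
\]
The second factor factors as $e^{tx} \cdot e^{ty} = e^{t(x+y)}$, and the first factor is exactly the generating function \eqref{eq:snum_bipartite} established in the previous section. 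Multiplying these together yields \eqref{eq:gfkpq}.

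There is no real obstacle: the entire argument is bookkeeping once one notices that induced subgraphs of $K_{p,q}$ are themselves complete bipartite, since this makes the definition of $\snum(K_{p,q};t)$ fold into a clean binomial convolution. The only point to be a little careful about is keeping the two binomial coefficients $\binom{p}{i}\binom{q}{j}$ straight and matching them with the $1/p!\,q!$ normalization, so that the convolution identity for bivariate exponential generating functions applies verbatim. The same template, combined with \eqref{eq:snum_multipartite}, will immediately produce the analogous $m$-partite version $e^{t(x_1+\cdots+x_m)}\cdot\frac{(1-m)+\cosh x_1+\cdots+\cosh x_m}{\cosh(x_1+\cdots+x_m)}$.
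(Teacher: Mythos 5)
Your proposal is correct and follows essentially the same route as the paper: expand $\snum(K_{p,q};t)$ via the definition \eqref{eq:sagt} into the double binomial convolution $\sum_{i,j}\binom{p}{i}\binom{q}{j}\snum(K_{i,j})t^{(p-i)+(q-j)}$, pass to bivariate exponential generating functions so the convolution becomes the product $e^{t(x+y)}$ times \eqref{eq:snum_bipartite}. The paper does exactly this (with the substitution $p''=p-p'$, $q''=q-q'$ making the factorization explicit), so no further comment is needed.
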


\begin{proof}
By definition, we have
\begin{align}
\sum_{p\ge 0 \atop q\ge 0} \snum(K_{p,q}; t) \frac{x^{p}}{p!}\frac{y^q}{q!}
= \sum_{p\ge 0 \atop q\ge 0}\left( \sum_{0 \le p'\le p \atop 0\le q'\le q} \binom{p}{p'}  \binom{q}{q'} \snum(K_{p',q'}) t^{p-p'+q-q'} \right)\frac{x^{p}}{p!}\frac{y^q}{q!}.
\label{eq:1}
\end{align}
Substituting $p'' = p - p'$ and $q'' = q -q'$, the right-hand side of \eqref{eq:1} becomes
\begin{align*}
&\sum_{p''\ge 0 \atop q''\ge 0}\left( \sum_{p' \ge 0\atop q' \ge 0} \binom{p'+p''}{p'}  \binom{q'+q''}{q'} \snum(K_{p',q'}) t^{p''+q''} \right)\frac{x^{p'+p''}}{(p'+p'')!}\frac{y^{q'+q''}}{(q'+q'')!} \\
%&= \sum_{p' \ge 0} \sum_{q' \ge 0} \sum_{p''\ge 0} \sum_{q''\ge 0} \snum(K_{p',q'}) \frac{x^{p'}}{p'!} \frac{(tx)^{p''}}{p''!}\frac{y^{q'}}{q'!} \frac{(ty)^{q''}}{q''!} \\
&= \left( \sum_{p' \ge 0} \sum_{q' \ge 0}  \snum(K_{p',q'}) \frac{x^{p'}}{p'!} \frac{y^{q'}}{q'!} \right)
\left( \sum_{p''\ge 0} \frac{(tx)^{p''}}{p''!} \right) \left(\sum_{q''\ge 0} \frac{(ty)^{q''}}{q''!} \right).
\end{align*}
The formula \eqref{eq:snum_bipartite} completes the proof.
%\begin{align*}
%&= \left( \frac{\cosh x + \cosh y -1}{\cosh(x+y)} \right) e^{tx} e^{ty} \\
%&= e^{t(x+y)} \left( \frac{\cosh x + \cosh y -1}{\cosh(x+y)} \right).
%\end{align*}
\end{proof}

\begin{rmk}
Since the coefficient of $\frac{y^q}{q!}$ in the formula \eqref{eq:gfkpq} is equal to $\sum_{n\ge 0} \snum(K_{q,n}; t) \frac{x^n}{n!}$,
it holds that
$$ \sum_{n\ge 0} \snum(K_{q,n}; t) \frac{x^n}{n!} = \left. \frac{\partial^q}{\partial y^q} e^{t(x+y)} \left( \frac{\cosh x + \cosh y -1}{\cosh(x+y)} \right) \right|_{y=0}.$$
In case of $q=1$, we have the exponential generating function \eqref{eq:gfk1n} for signed a-polynomials of star graphs again.
\end{rmk}

%\begin{table}[t]
%\centering
%\begin{tabular}{c||c}
%$G$ & $K_{p,q}$ \\
%\hline
%$\snum(G)$ &   \\
%$\anum(G)$ &  \\
%$\bnum(G)$ &  \\
%$\cnum_{2i}(G)$ &   \\
%%$\snum(G;t)$ & $\ds \sum_{k=0}^n (-1)^{n-k} ?  t^{2k}$\\
%\hline
%e.g.f. for $\snum(G;t)$ & $\ds \sum_{p\ge 0}\sum_{q\ge 0} \snum(K_{p,q}; t) \frac{x^p}{p!} \frac{y^q}{q!} = e^{t(x+y)} \left( \frac{\cosh x + \cosh y -1}{\cosh(x+y)} \right)$
%\end{tabular}
%\caption{Numbers for complete bipartite graphs $K_{p,q}$.}
%\end{table}

Similarity, we can deduce the next theorem by the same above method.
\begin{thm}
Let $K_{p_1,\dots, p_m}$ be the complete $m$-partite graph with $p_1$-set, \dots,  $p_m$-set. Then the exponential generating function for signed a-polynomials of $K_{p_1, \dots, p_m}$ is given by
\begin{align}
\sum_{p_1\dots,p_m\ge 0} \snum(K_{p_1,\dots,p_m}; t) \frac{x_1^{p_1}}{p_1!} \dots \frac{x_m^{p_m}}{p_m!}
= e^{t(x_1+\dots+x_m)} \left( \frac{(1-m)+\cosh x_1 + \dots + \cosh x_m}{\cosh(x_1+\dots+x_m)} \right).
\label{eq:gfkpm}
\end{align}
\end{thm}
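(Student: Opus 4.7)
The plan is to mirror the proof of the bipartite case \eqref{eq:gfkpq} essentially verbatim, replacing two indices by $m$ indices. The key structural observation is that any induced subgraph of $K_{p_1,\dots,p_m}$ is again a complete multipartite graph: if $V'$ contains exactly $p_i'$ vertices of the $i$-th part, then $K_{p_1,\dots,p_m}|_{V'} \cong K_{p_1',\dots,p_m'}$, and the number of such subsets is $\binom{p_1}{p_1'}\cdots\binom{p_m}{p_m'}$. Applying the definition \eqref{eq:sagt} therefore yields
\begin{align*}
\snum(K_{p_1,\dots,p_m}; t) = \sum_{0 \le p_i' \le p_i} \binom{p_1}{p_1'}\cdots\binom{p_m}{p_m'}\, \snum(K_{p_1',\dots,p_m'})\, t^{(p_1-p_1')+\cdots+(p_m-p_m')}.
\end{align*}

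Next I would multiply both sides by $\frac{x_1^{p_1}}{p_1!}\cdots\frac{x_m^{p_m}}{p_m!}$ and sum over $p_1,\dots,p_m\ge 0$. Setting $p_i'' = p_i - p_i'$ for each $i$ and using the identity $\binom{p_i'+p_i''}{p_i'}\frac{1}{(p_i'+p_i'')!} = \frac{1}{p_i'!\, p_i''!}$ decouples the indices, so that the total EGF factors as
\begin{align*}
\left(\sum_{p_1',\dots,p_m'\ge 0} \snum(K_{p_1',\dots,p_m'}) \prod_{i=1}^m \frac{x_i^{p_i'}}{p_i'!}\right) \prod_{i=1}^m \left(\sum_{p_i''\ge 0} \frac{(tx_i)^{p_i''}}{p_i''!}\right).
\end{align*}
The first factor is exactly the series computed in \eqref{eq:snum_multipartite}, while each inner sum on the right equals $e^{tx_i}$, giving the joint exponential $e^{t(x_1+\cdots+x_m)}$. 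Multiplying these together yields the desired formula \eqref{eq:gfkpm}.

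There is no real obstacle here beyond notational bookkeeping: the induced-subgraph structure of complete multipartite graphs makes the recursion completely diagonal in the parts, and the substitution $p_i'' = p_i - p_i'$ cleanly separates the ``translation by $t$'' factor from the underlying signed a-number EGF. The only step worth double-checking is that the argument does not require any parity assumption on $p_1+\cdots+p_m$; unlike the recurrence \eqref{eq:m-recur} used in the proof of \eqref{eq:snum_multipartite}, the identity above is just a direct expansion of the definition of $\snum(G;t)$ and holds for all tuples, so the factorization is unconditional. Since the authors remark that the result follows ``by the same above method,'' this essentially just packages the bipartite argument in $m$-variable notation.
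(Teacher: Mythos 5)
Your proposal is correct and matches the paper's intent exactly: the paper gives no separate proof for this theorem, stating only that it follows ``by the same above method'' as the bipartite case \eqref{eq:gfkpq}, and your argument is precisely that bipartite proof carried out in $m$-variable notation, with the factorization through \eqref{eq:snum_multipartite} and the product of exponentials $e^{tx_i}$. Your observation that the expansion of \eqref{eq:sagt} needs no parity restriction is also accurate.
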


%\begin{table}[t]
%\centering
%\begin{tabular}{c||c}
%$G$ & $K_{p1,\dots,p_m}$ \\
%\hline
%$\snum(G)$ &   \\
%$\anum(G)$ &  \\
%$\bnum(G)$ &  \\
%$\cnum_{2i}(G)$ &   \\
%%$\snum(G;t)$ & $\ds \sum_{k=0}^n (-1)^{n-k} ?  t^{2k}$\\
%\hline
%e.g.f. for $\snum(G;t)$ & $\ds \sum_{p_1\dots,p_m\ge 0} \snum(K_{p_1,\dots,p_m}; t) \frac{x_1^{p_1}}{p_1!} \dots \frac{x_m^{p_m}}{p_m!}
%= e^{t(x_1+\dots+x_m)} \left( \frac{(1-m)+\cosh x_1 + \dots + \cosh x_m}{\cosh(x_1+\dots+x_m)} \right).
%$
%\end{tabular}
%\caption{Numbers for complete multipartite graphs $K_{p_1,\dots, p_m}$.}
%\end{table}

Since $\snum(G) = \snum(G;0)$, putting $t = 0$ in the generating functions \eqref{eq:gfkpq} and \eqref{eq:gfkpm} gives the two formula
\eqref{eq:snum_bipartite} and \eqref{eq:snum_multipartite}, respectively. Also, since $\bnum(G) = \snum(G;1)$, putting $t = 1$ in the generating functions  \eqref{eq:gfkpq} and \eqref{eq:gfkpm} yields the generating functions for b-numbers of complete bipartite graphs and complete multipartite graphs as follows:
\begin{align*}
\sum_{p\ge 0}\sum_{q\ge 0} \bnum(K_{p,q}) \frac{x^{p}}{p!}\frac{y^q}{q!} &= e^{x+y} \left( \frac{\cosh x + \cosh y -1}{\cosh(x+y)} \right),\\
\sum_{p_1\dots,p_m\ge 0} \bnum(K_{p_1,\dots,p_m}) \frac{x_1^{p_1}}{p_1!} \dots \frac{x_m^{p_m}}{p_m!}
&= e^{x_1+\dots+x_m} \left( \frac{(1-m)+\cosh x_1 + \dots + \cosh x_m}{\cosh(x_1+\dots+x_m)} \right).
\end{align*}
%The table for $\bnum(K_{p,q})$ is shown in Table~\ref{fig:table_b}.
%\begin{table}[t]
%\centering
%\begin{tabular}{c|cccccccc}
%$p\backslash q$&0&1&2&3&4&5&6&7\\ \hline
%0&1&1&1&1&1&1&1&1\\
%1&1&0&$-1$&0&5&0&$-61$&0\\
%2&1&$-1$&0&8&0&$-106$&0&2498\\
%3&1&0&8&0&$-130$&0&3278&0\\
%4&1&5&0&$-130$&0&3680&0&$-149200$\\
%5&1&0&$-106$&0&3680&0&$-160816$&0 \\
%6&1&$-61$&0&3278&0&$-160816$&0&10026368\\
%7&1&0&2498&0&$-149200$&0&10026368&0
%\end {tabular}
%\caption{Table for $\bnum(K_{p,q})$}
%\label{fig:table_b}
%\end{table}

The next result follows from two generating functions \eqref{eq:gfkpq} and \eqref{eq:gfkpm} by plugging in \eqref{eq:convert}.
\begin{cor}
Let $P_{M(K_{p,q})}(z)$ and $P_{M(K_{p_1,\dots,p_m})}(z)$ denote the Poincar\'e polynomials of the real toric manifolds associated to  the complete bipartite graph  $K_{p,q}$ and the complete $m$-partite graph  $K_{p_1,\dots,p_m}$. Then the generating functions for Poincar\'e polynomials $P_{M(K_{p,q})}(z)$ and $P_{M(K_{p_1,\dots,p_m})}(z)$  are equal to
\begin{align*}
\sum_{n\ge 0} P_{M(K_{p,q})}(z) \frac{x^{p}}{p!}  \frac{y^{q}}{q!}
&= e^{x+y} \left( \frac{\cos (x\sqrt{z}) +  \cos (y\sqrt{z}) -1}{\cos(x\sqrt{z}+y\sqrt{z})} \right),\\
\sum_{n\ge 0} P_{M(K_{p_1,\dots,p_m})}(z) \frac{x_1^{p_1}}{p_1!} \dots \frac{x_m^{p_m}}{p_m!}
&= e^{x_1+\dots+x_m} \left( \frac{(1-m)+\cos (x_1\sqrt{z}) + \dots + \cos (x_m\sqrt{z})}{\cos(x_1\sqrt{z}+\dots+x_m\sqrt{z})} \right).
\end{align*}
\end{cor}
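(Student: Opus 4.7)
The plan is to apply the conversion formula \eqref{eq:convert} directly to the exponential generating functions \eqref{eq:gfkpq} and \eqref{eq:gfkpm} already established for $\snum(K_{p,q};t)$ and $\snum(K_{p_1,\dots,p_m};t)$. Since $\abs{V(K_{p,q})}=p+q$, the relation reads
\[
P_{M(K_{p,q})}(z)=(\sqrt{-z})^{p+q}\,\sa\!\left(K_{p,q};\tfrac{1}{\sqrt{-z}}\right),
\]
so multiplying by $x^{p}y^{q}/(p!\,q!)$ and summing absorbs the factor $(\sqrt{-z})^{p+q}$ into the monomials. Effectively I am substituting $x\mapsto x\sqrt{-z}$, $y\mapsto y\sqrt{-z}$, and $t\mapsto 1/\sqrt{-z}$ on the right-hand side of \eqref{eq:gfkpq}, which is the compact way to read off the desired generating function.

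First I would carry out this substitution in the exponential prefactor of \eqref{eq:gfkpq}: one finds $t(x+y)$ becomes $\tfrac{1}{\sqrt{-z}}\bigl(x\sqrt{-z}+y\sqrt{-z}\bigr)=x+y$, so the $z$-dependence drops out of the exponential entirely and leaves $e^{x+y}$. Next I would simplify each hyperbolic cosine using the branch $\sqrt{-z}=\imath\sqrt{z}$ together with the elementary identity $\cosh(\imath u)=\cos u$, obtaining $\cosh(x\sqrt{-z})=\cos(x\sqrt{z})$, $\cosh(y\sqrt{-z})=\cos(y\sqrt{z})$, and $\cosh\bigl((x+y)\sqrt{-z}\bigr)=\cos\bigl((x+y)\sqrt{z}\bigr)$. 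Assembling these three pieces yields exactly the claimed formula for the bipartite case.

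The multipartite case is identical mutatis mutandis: substitute $x_k\mapsto x_k\sqrt{-z}$ for $k=1,\dots,m$ and $t\mapsto 1/\sqrt{-z}$ in \eqref{eq:gfkpm}; the exponential collapses to $e^{x_1+\dots+x_m}$, each $\cosh(x_k\sqrt{-z})$ becomes $\cos(x_k\sqrt{z})$, and the denominator $\cosh\bigl((x_1+\dots+x_m)\sqrt{-z}\bigr)$ becomes $\cos\bigl((x_1+\dots+x_m)\sqrt{z}\bigr)$. This produces the second displayed formula in the corollary.

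I do not anticipate a genuine obstacle: the only step deserving care is the consistent choice of branch $\sqrt{-z}=\imath\sqrt{z}$, which is what allows the $\cosh$ factors to convert cleanly into $\cos$ factors, and the bookkeeping which confirms that $(\sqrt{-z})^{\abs{V}}$ is precisely the factor needed to convert each monomial $x^{p}/p!\cdot y^{q}/q!$ (respectively $\prod x_k^{p_k}/p_k!$) into the rescaled monomial demanded by \eqref{eq:gfkpq} (respectively \eqref{eq:gfkpm}). Once this substitution is made, the computation is a direct simplification with no delicate combinatorial content beyond what has already been proven.
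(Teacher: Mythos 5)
Your proposal is correct and is exactly the paper's intended argument: the paper itself derives the corollary by "plugging in \eqref{eq:convert}," i.e.\ substituting $t \leftarrow \tfrac{1}{\sqrt{-z}}$ and rescaling each variable by $\sqrt{-z}$ in \eqref{eq:gfkpq} and \eqref{eq:gfkpm}, with $\cosh(\imath u)=\cos u$ converting the hyperbolic cosines. Nothing further is needed.
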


Table~\ref{fig:table_s} shows the Poincar\'e polynomials $P_{M(K_{p,q})}(z)$ for $p\le 6$ and $q \le3$.
\begin{table}[t]
\centering
\footnotesize
\begin{tabular}{c|llll}
$p\backslash q$&$0$&$1$&$2$&$3$\\ \hline
$0$& $1$&$1$&$1$&$1$\\
$1$& $1$&$1+z$&$1+2z$&$1+3z+2z^2$\\
$2$& $1$&$1+2z$&$1+4{z}+3z^2$&$1+6{z}+13z^2$\\
$3$& $1$&$1+3z+2z^2$&$1+6{z}+13z^2$&$1+9{z}+39{z}^{2}+31z^3$\\
$4$& $1$&$1+4{z}+8z^2$&$1+8z+34{z}^{2}+27z^3$&$1+12z+86z^2+205z^3$\\
$5$& $1$ & $1+5{z}+20{z}^{2}+16z^3$ & $1+10{z}+70{z}^{2}+167z^3$ & $1+15{z}+160{z}^{2}+763{z}^{3}+617z^4$\\
$6$& $1$ & $1+6z+40{z}^{2}+96z^3$ & $1+12{z}+125{z}^{2}+597{z}^{3}+483z^4$ & $1+18{z}+267{z}^{2}+2123{z}^{3}+5151z^4$
\end {tabular}
\caption{Table for $P_{M(K_{p,q})}(z)$}
\label{fig:table_s}
\end{table}

\section*{Acknowledgment}
This research was supported by Basic Science Research Program through the National Research Foundation of Korea (NRF) funded by the Ministry of Education, Science and Technology (2012-0004476, 2012R1A1A1014154) and INHA UNIVERSITY Research Grant.

%%%%%%%%%%%%%%%%%%

%\nocite{*}

%\bibliographystyle{amsabbrv}
%%\bibliographystyle{abbrvnat}
%\bibliography{refer}

\begin{thebibliography}{PRW08}

\bibitem[Aig07]{Aig07}
M.~Aigner, \emph{A course in enumeration}, Graduate Texts in Mathematics, vol.
  238, Springer, Berlin, 2007.

\bibitem[Com74]{Com74}
L.~Comtet, \emph{Advanced combinatorics}, enlarged ed., D. Reidel Publishing
  Co., Dordrecht, 1974, The art of finite and infinite expansions.

\bibitem[CP12]{CP12}
S.~Choi and H.~Park, \emph{{A new graph invariant arises in toric topology}},
  ArXiv e-prints (2012).

\bibitem[Del88]{Del88}
T.~Delzant, \emph{Hamiltoniens p\'eriodiques et images convexes de
  l'application moment}, Bull. Soc. Math. France \textbf{116} (1988), no.~3,
  315--339.

\bibitem[DJ91]{DJ91}
M.~W. Davis and T.~Januszkiewicz, \emph{Convex polytopes, {C}oxeter orbifolds
  and torus actions}, Duke Math. J. \textbf{62} (1991), no.~2, 417--451.

\bibitem[Pos09]{Pos09}
A.~Postnikov, \emph{Permutohedra, associahedra, and beyond}, Int. Math. Res.
  Not. IMRN (2009), no.~6, 1026--1106.

\bibitem[PRW08]{PRW08}
A.~Postnikov, V.~Reiner, and L.~Williams, \emph{Faces of generalized
  permutohedra}, Doc. Math. \textbf{13} (2008), 207--273.

\end{thebibliography}

\providecommand{\bysame}{\leavevmode\hbox to3em{\hrulefill}\thinspace}
\providecommand{\href}[2]{#2}

%%%%%%%%%%%%%%%%%%

\end{document}